\newcommand{\pb}{\mathbb{P}} 
\newcommand{\reals}{\mathbb{R}} 
\newcommand{\cplx}{\mathbb{C}} 
\newcommand{\floor}[1]{\lfloor #1 \rfloor} 
\newtheorem{thm}{Theorem}
\newtheorem{pro}[thm]{Proposition}
\theoremstyle{remark}
\newtheorem{rem}{Remark}
\theoremstyle{definition}
\title[Polynomial approximation of Self-similar measures]{Polynomial approximation of  self-similar measures and the spectrum of the transfer operator}
\author{Christoph Bandt and Helena Pe\~na} 
\thanks{This work was supported by Deutsche Forschungsgemeinschaft grant Ba 1332/11-1.} 
\date{}
\begin{document}

\begin{abstract}
We consider self-similar measures on $\reals .$  The Hutchinson operator $H$ acts on measures and is the dual of the transfer operator $T$ which acts on continuous functions. We determine polynomial eigenfunctions of $T .$ As a consequence, we obtain eigenvalues of $H$ and natural polynomial approximations of the  self-similar measure. Bernoulli convolutions are studied as an example.
\end{abstract}

\maketitle

\section{Introduction}\label{intro}
We consider self-similar measures $\nu$ on $\reals,$ given by affine maps $f_i(x)=t_ix+v_i$ with $0<t_i<1,$ and probabilities $p_i$ for $i=1,...,m.$ A simple example are Bernoulli convolutions where we have two mappings on an interval, $t_1=t_2=t$ and $p_1=p_2=\frac12.$  The measure $\nu$ has compact support $X$ which is the self-similar set defined by the $f_i.$ It is the unique probability measure which is fixed  by the Hutchinson operator

\begin{equation}\label{hut}  
H\mu (B)=\sum_{i=1}^m p_i  \mu (f_i^{-1}(B))\qquad\mbox{ for Borel sets } B\subset X\, .\end{equation}

$H$ acts on the space of all finite Borel measures on $X.$ Hutchinson  \cite{hutchinson}  proved that for any initial probability measure $\mu_0$ the sequence $\mu_n=H\mu_{n-1}, n=1,2,...$ converges to $\nu$ geometrically, with factor $t=\max t_i ,$ with respect to a certain metric. Thus $1$ is an eigenvalue  of $H,$ with eigenvector $\nu ,$ and there should be no  
other eigenvalues of modulus larger than $t.$  The situation is a bit more complicated and will be discussed in Section 2.

For every positive integer $k,$ there are examples where $\nu$ has a density which is $k$ times differentiable. All Bernoulli convolutions with sufficiently large parameter $t<1$ outside an exceptional set of dimension zero, belong to this class \cite{hausdorffdim_shmerkin}. In this case, the first $k$ derivatives of the density are eigenfunctions of $H$ corresponding to the eigenvalues $t, t^2,...,t^k.$ Numerical experiments indicate that this holds if the density has only $k-1$ derivatives, and these values are the leading eigenvalues of the operator (Figure 1).  We shall give an explanation for this behavior.

\begin{figure}[h!]
\includegraphics[width=0.75\textwidth]{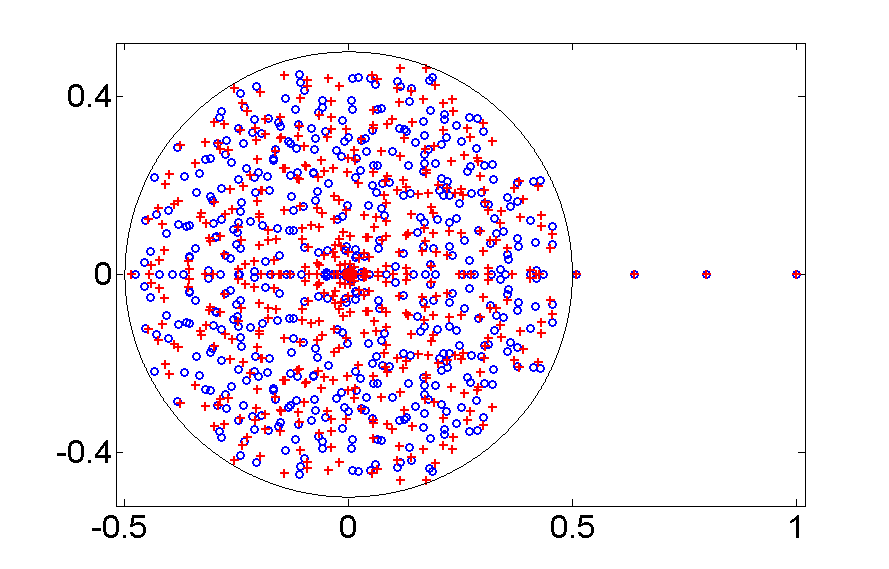}  
\caption{Eigenvalues of two matrix approximations of the Hutchinson operator for the Bernoulli convolution with $t=0.8.$  The matrix size, $N=499$ and 500, does not influence the leading eigenvalues $t^k, k=0,...,3$
while virtually all remaining eigenvalues are different. The circle has radius $\frac12 .$} \label{BC08}
\end{figure}

Hutchinson's theorem has led to well-known iteration algorithms which generate pictures of the self-similar measure $\nu .$ 
 These methods are fast and often give a nice impression of fractal structures.  In cases where $\nu$ has a density, however, they are not too accurate.  It is hard to decide from such approximations whether the function is smooth or monotone since one cannot distinguish genuine fractal structure and noise. Moreover, an approximation given by an iteration algorithm is just a set of data, not a mathematical  object.  We shall present an approximation by polynomials as an alternative.  See Figure 2.

The key to our study is the \emph{transfer operator} $T$ acting on continuous functions $h\in C(X)$ as follows.
\begin{equation}\label{trans} 
Th(x) = \sum_{i=1}^m p_i\cdot h(f_i(x)) \qquad\mbox{ for } x\in X\, .\end{equation}

The Hutchinson operator is the dual operator of $T$ and so their spectra are equal, see for instance \cite[Theorem 6.22]{kato}. Our first result says that $T$ has polynomial eigenfunctions which are easy to determine.
  
\begin{thm}\label{polyef} (Polynomial eigenfunctions of the transfer operator, real case)\\
The transfer operator $T$ has eigenvalues $\lambda_n = \sum_{i=1}^m p_i  t_i^n$ and corresponding polynomial eigenfunctions $q_n$ of degree $n$ for $n=0,1,2,...$ 
If the mappings $f_i$ have equal contraction factors $t_i=t,$
these eigenvalues are $\lambda_n = t^n$ for $n=0,1,2,...$.
\end{thm}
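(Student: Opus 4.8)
The plan is to observe that $T$ sends polynomials to polynomials of the same degree with a triangular action in the monomial basis, and then to build the eigenfunctions one degree at a time. First I would apply $T$ to a single monomial. Since $f_i(x)=t_ix+v_i$, the binomial theorem gives
\begin{equation*}
T(x^n)=\sum_{i=1}^m p_i (t_i x+v_i)^n=\left(\sum_{i=1}^m p_i t_i^n\right)x^n+\sum_{k=0}^{n-1}\binom{n}{k}\left(\sum_{i=1}^m p_i t_i^k v_i^{n-k}\right)x^k ,
\end{equation*}
so $T(x^n)$ is a polynomial of degree exactly $n$ whose leading coefficient is precisely $\lambda_n=\sum_{i=1}^m p_i t_i^n$. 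Consequently $T$ preserves each finite-dimensional space $P_n$ of polynomials of degree at most $n$, and in the basis $1,x,\dots,x^n$ it is represented by a triangular matrix whose diagonal entries are $\lambda_0,\lambda_1,\dots,\lambda_n$. Hence the eigenvalues of $T$ restricted to $P_n$ are exactly these diagonal entries.

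Next I would construct, for each $n$, an eigenfunction $q_n\in P_n$ of degree exactly $n$ with eigenvalue $\lambda_n$. Normalizing $q_n(x)=x^n+\sum_{k=0}^{n-1}c_k x^k$ and writing $T(x^j)=\sum_{k\le j}a_{jk}x^k$ with $a_{jj}=\lambda_j$, the equation $Tq_n=\lambda_n q_n$ reduces, upon comparing the coefficient of $x^k$ for $k=n-1,n-2,\dots,0$, to the recursion
\begin{equation*}
(\lambda_k-\lambda_n)\,c_k=-\sum_{j=k+1}^{n}a_{jk}\,c_j ,\qquad c_n:=1 .
\end{equation*}
Provided $\lambda_k\ne\lambda_n$ for every $k<n$, this determines $c_{n-1},\dots,c_0$ uniquely and produces the desired eigenfunction.

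The only thing that could obstruct this recursion is a coincidence $\lambda_k=\lambda_n$ among the eigenvalues, so the key step is to rule this out. Since each $p_i>0$, $\sum_i p_i=1$, and $0<t_i<1$, we have $t_i^{n+1}<t_i^n$ for every $i$, whence $\lambda_{n+1}=\sum_i p_i t_i^{n+1}<\sum_i p_i t_i^n=\lambda_n$. Thus the sequence $(\lambda_n)$ is strictly decreasing and in particular its terms are pairwise distinct, so the recursion above never divides by zero and each $q_n$ is well defined. Finally, in the equal-contraction case $t_i=t$ the leading coefficient collapses to $\lambda_n=\sum_i p_i t^n=t^n$, giving the stated special form.
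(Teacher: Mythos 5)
Your proposal is correct and follows essentially the same route as the paper: compute $T(x^n)$ by the binomial theorem, observe the triangular action on the monomial basis with diagonal entries $\lambda_n=\sum_i p_i t_i^n$, and use the strict monotonicity $\lambda_0>\lambda_1>\cdots$ (from $0<t_i<1$) to conclude that each diagonal entry is a simple eigenvalue with an eigenpolynomial of exact degree $n$. The only difference is cosmetic — you write out the back-substitution recursion for the coefficients explicitly, where the paper simply cites the eigenstructure of an upper-triangular matrix with distinct diagonal entries.
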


Different versions of Theorem \ref{polyef} will be discussed in Section 3. The proof provides a construction of the $q_n.$ Then we turn to the calculation of polynomial approximations of the self-similar measure $\nu .$ In the case that the support $X$ of $\nu$ is an interval, the following theorem holds with the usual notation 
\begin{equation}\label{scalarprod}
\langle f,g\rangle =\int_X f(x)g(x) dx\quad\mbox{ and }\quad\nu (f)=\int_X f(x) d\nu (x)\quad\mbox{ for }\ f,g\in C(X).
\end{equation}
 
\begin{thm}\label{polapprox} (Polynomial approximation of the self-similar measure)\\
Given the polynomial eigenfunctions $q_0=1,q_1,q_2,...$ of the transfer operator $T,$ let $v_n$ denote the polynomial on $X$ of degree $n$ which  satisfies the equations 

\begin{equation}\label{orth}
\langle v_n, q_0 \rangle =1\ ,\quad \langle v_n, q_k \rangle=0\quad\mbox{ for }\  k=1,\ldots,n\, . 
\end{equation}

Considered as density function, $v_n$ is the best approximation of the measure $\nu$ among polynomials of degree at most $n$ in the sense that 
\[  \frac{\nu (v_n)} {\| v_n\|_2} >  \frac{\nu (q)} {\| q\|_2}   \]
for all polynomials $q$ of degree $\le n$ which are not multiples of $v_n.$ \
If $\nu$ has an $L^2$ density $v$ then $v_n$ converges  to $v$ in the $L^2$ norm. \vspace{4ex}
\end{thm}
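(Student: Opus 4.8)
The plan is to identify $v_n$ as the Riesz representative in $L^2(X)$ of the linear functional $q\mapsto\nu(q)$ restricted to the space $P_n$ of polynomials of degree at most $n$, after which both assertions follow from standard Hilbert-space facts. First I would compute the numbers $\nu(q_k)$. Since $\nu$ is fixed by the Hutchinson operator and $H$ is dual to $T$, we have $\nu(Th)=(H\nu)(h)=\nu(h)$ for every $h\in C(X)$; applying this to $h=q_k$ and using $Tq_k=\lambda_k q_k$ gives $(1-\lambda_k)\nu(q_k)=0$. As $q_0=1$ and $\nu$ is a probability measure, $\nu(q_0)=1$, whereas for $k\ge 1$ the eigenvalue $\lambda_k=\sum_i p_i t_i^k$ satisfies $0<\lambda_k<1$, forcing $\nu(q_k)=0$. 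Hence $\nu(q_0)=1$ and $\nu(q_k)=0$ for $1\le k\le n$.

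Next I would compare these values with the defining relations \eqref{orth}, which say exactly that $\langle v_n,q_k\rangle=\nu(q_k)$ for $k=0,\ldots,n$. Since $q_0,\ldots,q_n$ is a basis of $P_n$ (because $q_k$ has degree $k$), the linear functionals $q\mapsto\nu(q)$ and $q\mapsto\langle q,v_n\rangle$ therefore coincide on all of $P_n$; the same linear independence shows that \eqref{orth} determines $v_n$ uniquely, and $v_n\neq 0$ because $\langle v_n,q_0\rangle=1$. Thus $v_n$ represents $\nu|_{P_n}$, and the best-approximation statement is now Cauchy--Schwarz: for every $q\in P_n$,
\[\frac{\nu(q)}{\|q\|_2}=\frac{\langle q,v_n\rangle}{\|q\|_2}\le\|v_n\|_2=\frac{\nu(v_n)}{\|v_n\|_2},\]
with equality exactly when $q$ is a positive multiple of $v_n$, so the inequality is strict whenever $q$ is not a multiple of $v_n$.

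For the convergence statement, suppose $d\nu=v\,dx$ with $v\in L^2(X)$, so that $\nu(q)=\langle q,v\rangle$ for all $q$. Combined with $\nu(q)=\langle q,v_n\rangle$ on $P_n$, this gives $\langle q,v-v_n\rangle=0$ for every $q\in P_n$, and since $v_n\in P_n$ it identifies $v_n$ as the orthogonal projection of $v$ onto $P_n$. By the Weierstrass approximation theorem the polynomials are dense in $L^2(X)$ for the compact interval $X$, so the increasing family $P_0\subset P_1\subset\cdots$ exhausts a dense subspace; the orthogonal projections of $v$ onto these subspaces therefore converge to $v$, giving $\|v-v_n\|_2\to 0$.

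I expect the only substantive step to be the first one, namely converting the fixed-point/duality relation into the vanishing $\nu(q_k)=0$ for $k\ge 1$; from there the argument is the routine Riesz--Cauchy--Schwarz--projection machinery. The one point that needs care is the equality case of Cauchy--Schwarz, where one must distinguish the positive multiples of $v_n$ (for which equality holds) from the general multiples that the statement excludes.
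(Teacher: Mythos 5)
Your proposal is correct and follows essentially the same route as the paper: both establish $\nu(q_k)=0$ for $k\ge 1$ via the duality $\nu(Tq_k)=(H\nu)(q_k)$, identify $v_n$ as the Riesz representative of $\nu$ on $P_n$ (the paper by decomposing $q=\alpha v_n+h$ with $h\in\mathrm{span}\{q_1,\dots,q_n\}$, you by checking agreement of the two functionals on the basis $q_0,\dots,q_n$ -- the same fact in slightly different clothing), and then conclude by Cauchy--Schwarz and the orthogonal-projection/Weierstrass argument for $L^2$ convergence. Your explicit attention to the equality case of Cauchy--Schwarz is a small but welcome addition that the paper leaves implicit.
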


\begin{figure}[h]
\includegraphics[width=0.49\textwidth]{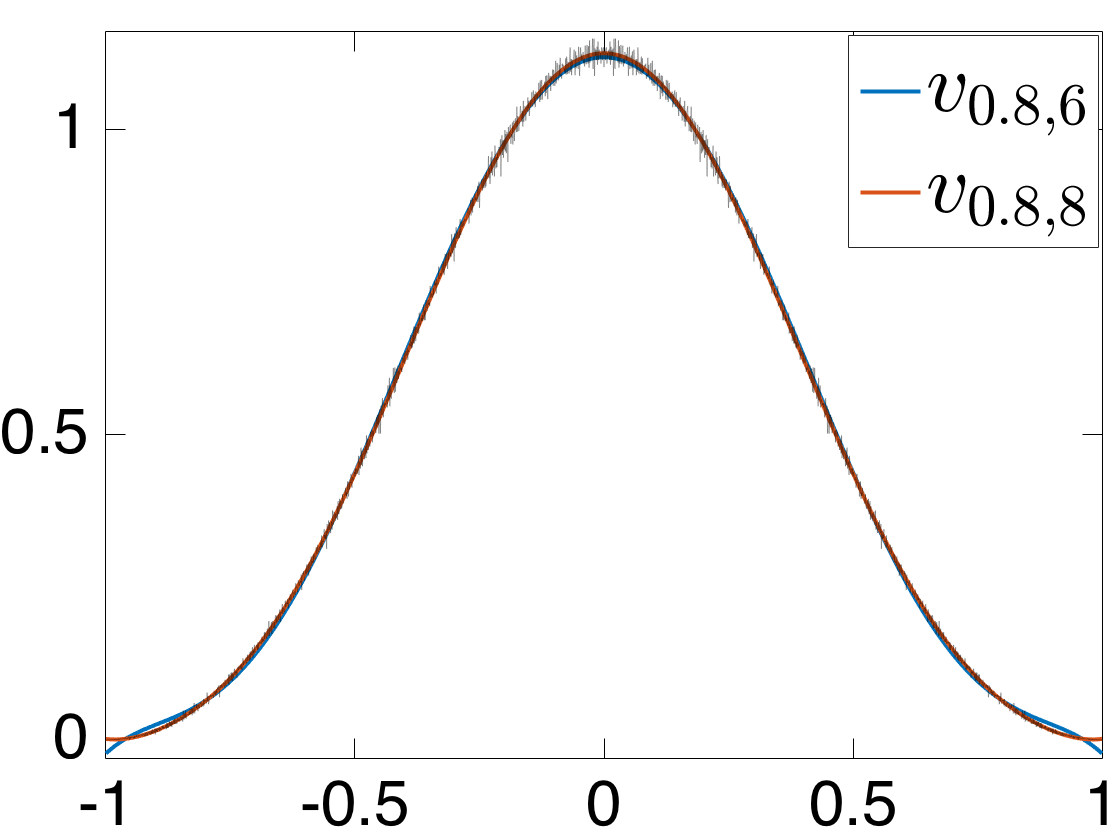}\hfill
\includegraphics[width=0.49\textwidth]{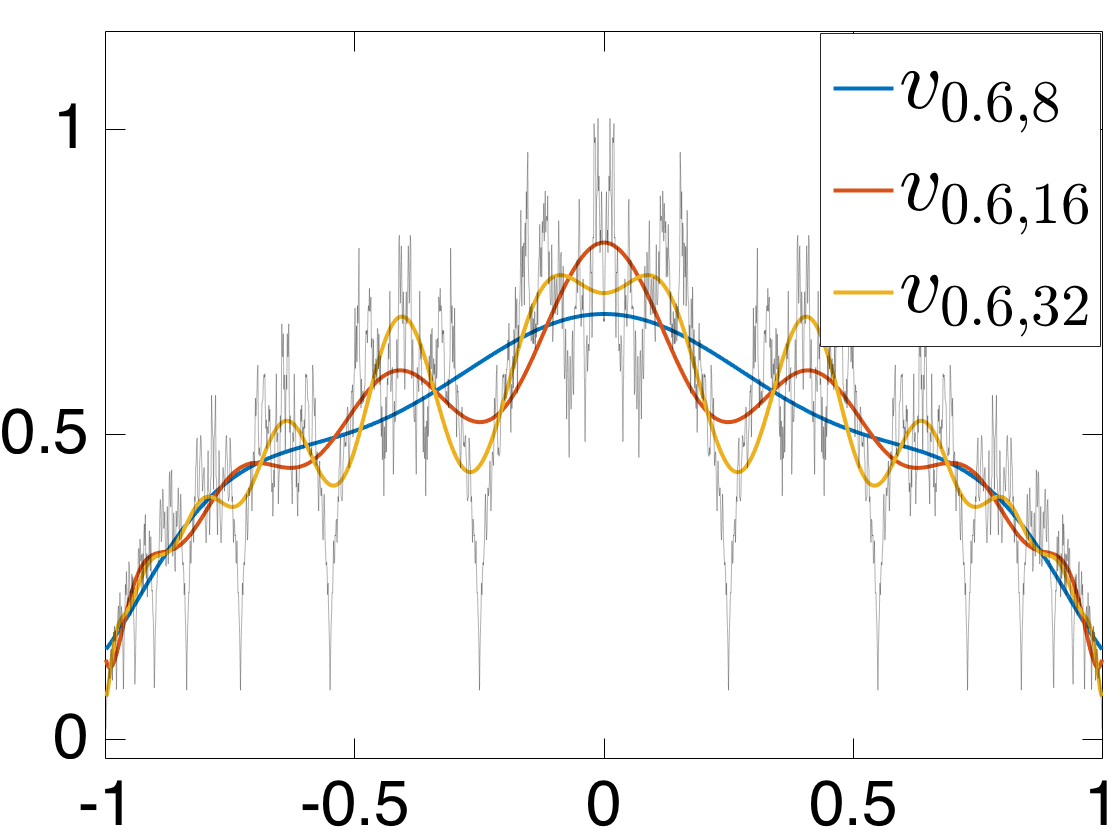}
\caption{Polynomial approximations $v_{t,n}$ of the Bernoulli convolution measure $\nu_t$ in the smooth case $t=0.8$ (left) and the more fractal case $t=0.6$ (right). In the left picture, one could not distinguish at this scale between the approximations of degree $n\geq 8$. The gray line shows a histogram with 2000 bins based on $2^{20}$ points generated by the 'chaos game' algorithm.}\vspace{4ex}
\label{polyax}
\end{figure}

Figure \ref{polyax} shows approximations of a smooth and a more fractal self-similar measure. In Section 4 we prove Theorem \ref{polapprox} and discuss modifications. In Section 5 we show that the problem of finding the approximating polynomials $v_n$ can be regarded as a moment problem, as considered in \cite{barnsleydemko,baileyrose}.
As a consequence, construction of the approximation $v_n$ amounts to solving a system of linear equations with a Hilbert matrix of coefficients. Section 6 deals with the case of Bernoulli convolutions, and
Theorem \ref{bcpols} will give an explicit analytical representation of the $q_n$ for this case. In the final section we indicate how singular measures can be studied with this approach.

\section{Experiments with the spectrum of Hutchinson's operator}
Before we go into proofs, let us discuss some experiments which have motivated this work. All Bernoulli convolutions will be considered on the interval $I=[-1,1].$  The parameter $t$ varies between 0.5 and 1, the figures refer to $t=0.8 .$  The mappings are
\begin{equation}\label{f12}
f_1(x)=tx-(1-t)\quad\mbox{ and }\quad f_2(x)=tx+(1-t)\, .\end{equation}
One way to think about self-similar measures is to consider $\nu$ as stationary distribution with respect to random iteration of the functions, popularized as 'chaos game'. We have a discrete time Markov process $(X_0,X_1,...)$ with kernel
\begin{equation}\label{ker}
p(x,B)=\pb(X_{n+1}\in B| X_n=x ) =\textstyle{\frac12}\,\delta_B(f_1(x)) + \textstyle{\frac12}\,\delta_B(f_2(x))
\end{equation}
To approximate by a Markov chain $(Z_0,Z_1,...)$ with $N$ states, we divide $I$ into $N$ disjoint intervals $I_k$ and introduce the transition probabilities   
\[ p_{kj} = \pb(Z_{n+1}\in I_j|Z_n \in I_k)=\frac 12\frac{|f_1^{-1}(I_j) \cap I_k|}{|I_k|} + \frac 12\frac{|f_2^{-1}(I_j) \cap I_k|}{|I_k|}
\]
where $|J|$ denotes the length of the interval $J.$
The matrix $T_N=( p_{kj})_{k,j=1,...,N}$ represents the transfer operator $T$ when multiplied by column vectors $(v_1,...,v_N)'$ which describe functions, and the Hutchinson operator $H$ when multiplied from the left by row vectors $(u_1,...,u_N)$ which describe measures. This matrix is always irreducible and aperiodic. (This follows from 
$p_{11}>0,\  p_{NN}>0,$ and from the fact that an interval $J$ is mapped either by  $f_1^{-1}$ or by  $f_2^{-1}$ to an interval which is $1/t$ times longer, unless $J$ contains the points $t$ and $1-t$ and thus will be mapped to state 1 and $N.$) 

\begin{figure}
\includegraphics[width=0.75\textwidth]{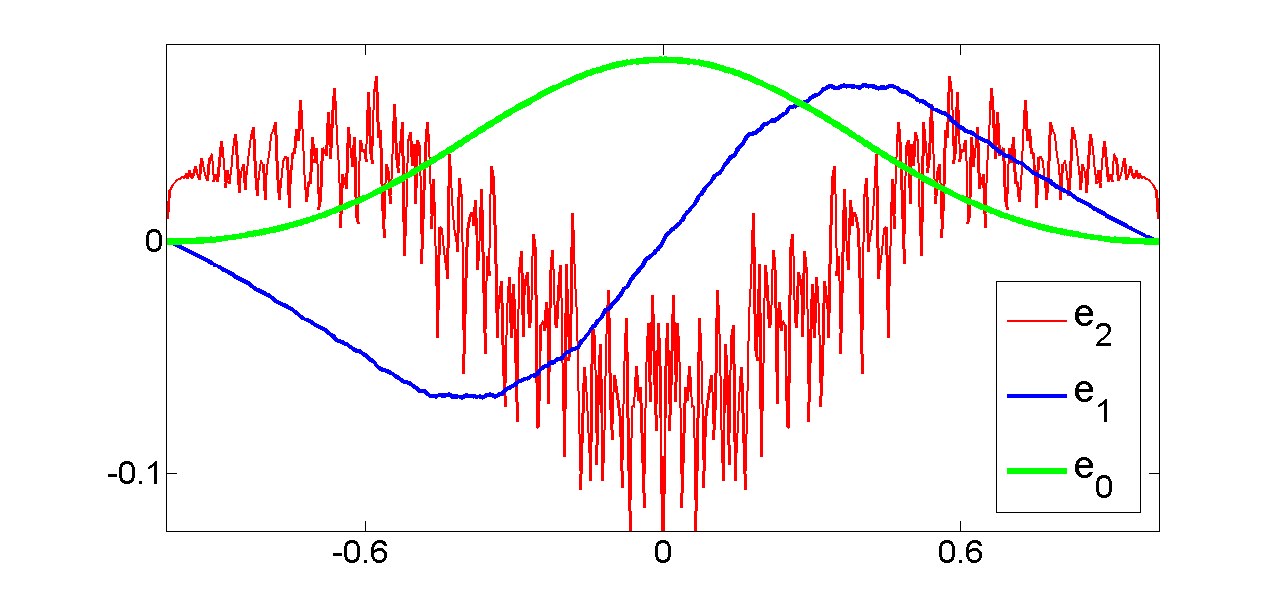}\\
\includegraphics[width=0.75\textwidth]{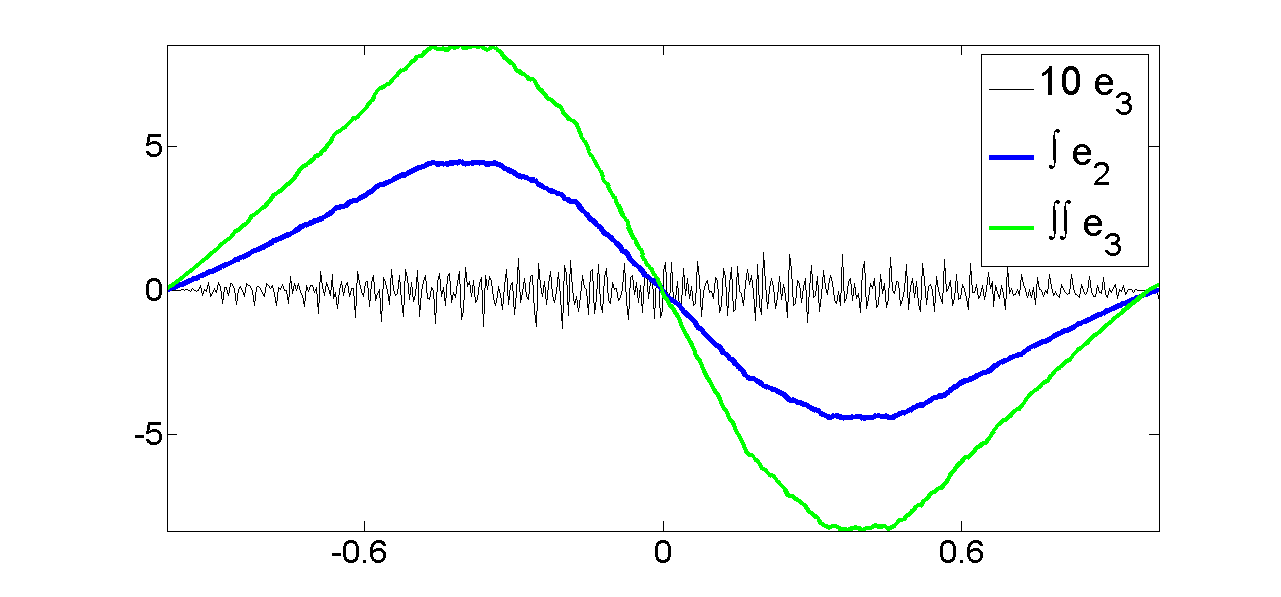}
\caption{Above: leading left eigenvectors $e_0,e_1,e_2$ of $T_N$ for the Bernoulli convolution with $t=0.8$ and $N=500.$  Below: scaled eigenvector  $e_3,$ integral of $e_2$ and iterated integral of $e_3.$ The latter two coincide with $e_1,$ up to a constant. Thus $e_1,e_2,e_3$ are the first 3 derivatives of the density $e_0$ of the self-similar measure.}
\label{BF08}
\end{figure}

So there is a unique left eigenvector of $T_N$ for the eigenvalue 1 which represents the self-similar measure, and a corresponding right eigenvector which is constant since we have a Markov matrix.  The convergence is fast: even if we take $N=10^6$ and start with uniform distribution, 60 iterations yield sufficient accuracy.  Calculations were performed with MATLAB. To determine eigenvalues and eigenvectors, we took $N$ between 400 and 2000. For each parameter $t,$ different $N$ were tested, with partitions into intervals of equal lengths and randomly perturbed partitions. 

The pattern of eigenvalues was always the same, as shown in Figure 1. The values $1,t,t^2,...,t^k$ were eigenvalues, as long as $t^k>\frac12 .$ The remaining eigenvalues were spread inside the circle $|\lambda|<\frac12 .$ In a few experiments, one or two percent of the points were slightly outside that circle. The distribution inside the circle could be more or less uniform, it could be more a ring, especially when $t$ is near $\frac12 ,$ or zero could be a multiple eigenvalue, but these properties usually changed with $N,$ as indicated in Figure 1. 

\begin{figure}[h]
\includegraphics[width=0.75\textwidth]{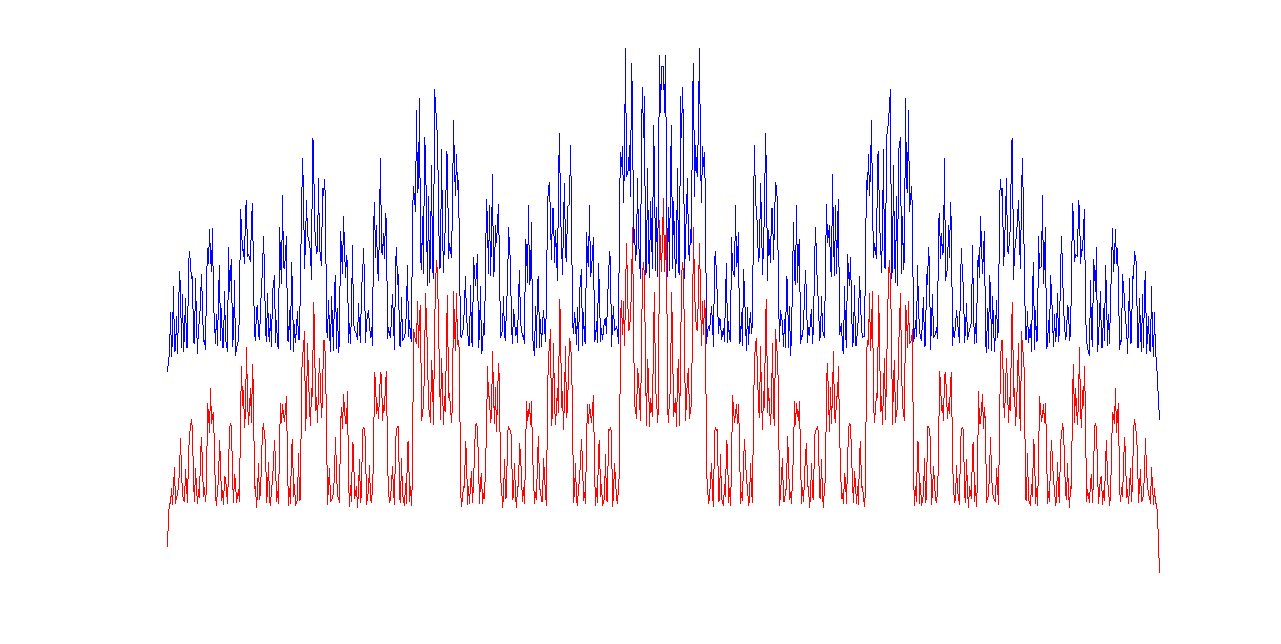}
\caption{$\beta=1/t=1.84$ was chosen near the Pisot parameter 1.8393... Below: the self-similar measure. Above: cumulative sum of second eigenvector. Although $\nu$ is not a differentiable function, the second eigenvector looks like a derivative of $\nu .$ }
\label{spec}
\end{figure}

The leading eigenvalues $t^k>\frac12, k=0,1,...$ did never change, and their left eigenvectors were also very stable. If a highly oscillating function is drawn for very different $N,$ there are natural differences, but for the smooth eigenfunctions deviations were very small. In Figure 3, the function which represents the measure $\nu$ looks like a normal distribution, and this is true whenever $t>0.8$ (cf. \cite{solomyak04}).

In Figure 3, the eigenvector $e_1$ represents the negative of the derivative of $e_0.$ The third eigenvector seems irregular, but when it is integrated (just taking cumulative sums) we get $e_1,$ multiplied by a constant. This indicates that we have the second derivative of $\nu .$ And even $e_3,$ two times integrated, gives the same result, up to a constant. Thus we have a third derivative of $\nu ,$ in spite of the fact that the second derivative seems a non-differentiable function. 

For the case that $\nu$ has a $k$ times differentiable density $\rho (x)$ it is easy to check that the derivatives are eigenvectors. The self-similarity condition \eqref{hut} can be reformulated as
\[  \rho(x)=\frac{\beta}{2}\left[ \rho(\beta x-1+\beta)+\rho(\beta x+1-\beta )\right]=: (H\rho)(x) \qquad\mbox{ with }\beta=\frac{1}{t}\, .\]
Taking derivatives we get  $\rho '=\beta\cdot H\rho ' ,$ that is, $H\rho '=t\rho ',$ and  $H\rho ''= t^2\cdot\rho ''$ etc. We cannot prove, however, that there are no other eigenfunctions with $|\lambda|>\frac12 .$  We also do not explain the fact that we get one more derivative than expected. As shown in Figure 4, this even holds when the density function is not at all differentiable. For the non-fractal case $t=\frac12$ this can be checked: $\nu$ has density equal $\frac12 ,$ and the distributional derivative $\frac12 ( \delta_{-1}-\delta_1 )$ is an eigenfunction of $H$ with eigenvalue $\frac12 .$ 

Pisot parameters play a large role in Bernoulli convolutions since Erd\"os proved 1939 that $\nu$ has no density in this case \cite{solomyak00,solomyak04}.  In our numerical study, they did not look very special, although the pattern of eigenvalues seemed somewhat different, and for the golden mean a discontinuous eigenfunction with $\lambda=-\frac12$ was identified. Figure 4, for example, is very near to the so-called Tribonacci parameter, and still we have the 'derivative' as second eigenvector.

All eigenvalues with $|\lambda|<\frac12$ have extremely noisy left eigenvectors, although they are symmetric, odd or even functions. Since they change with $N$ and with slight changes of partition, they can be interpreted as noise. This is highly correlated noise, however. If these functions are integrated two times, they still have a fractal appearance. If white noise is integrated two times, a smooth function will result.

The right eigenvectors of $T_N$ look much more regular. Those which correspond to $\lambda=t^k>\frac12$ are polynomials, as shown in Figure 5. This will be proved below.  The other right eigenvectors represent noise. They are all odd or even, and rather smooth functions -- more smooth than left eigenvectors after two integrations. The duality between polynomials and special functions somewhat resembles the classical Sturm-Liouville operators which lead to special functions and associated polynomials of Legendre, Hermite etc, although our operator $T$ is not self-adjoint and we have only a finite set of 'special functions'.

\begin{figure}
\includegraphics[width=0.75\textwidth]{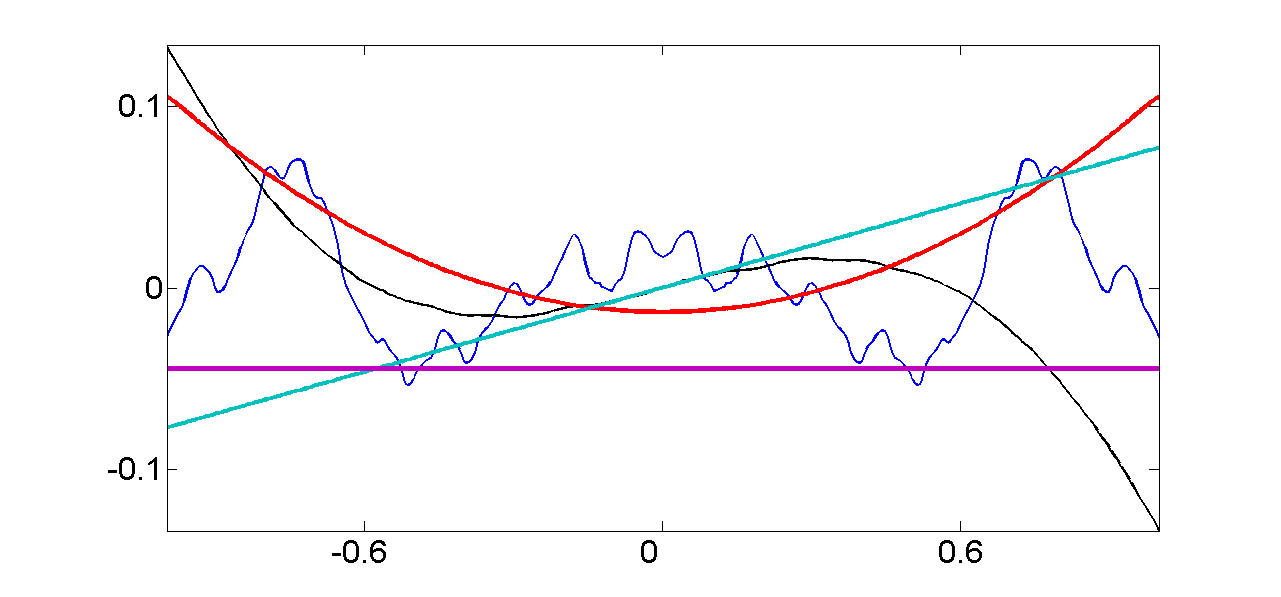}
\caption{Right eigenvectors are polynomials for $\lambda>\frac12 .$ First eigenvector with $|\lambda|<\frac12$ shown for comparison. Parameters as in Figure 3.}
\label{BP08}
\end{figure}

What about the circle $|\lambda |=\frac12$ ?  For $t=\frac12$ there is an explanation. In this case $Th(x)=\frac12 (h(\frac{x-1}{2})+ h(\frac{x+1}{2}))$ and $T$ has all points $\lambda$ of the unit circle as eigenvalues. An eigenfunction for $\lambda$ is $h(x)=\sum_{n=0}^\infty \lambda^n \exp(\pi k 2^n{\bf i}x)$ where $k$ is a fixed odd number. See Driebe \cite{driebe} and the literature quoted there.  $h(x)$ is a Weierstrass function. It is well-known that $h(x)$ is differentiable only for $|\lambda|<\frac12 ,$  see \cite{baranski1} and its references.  Apparently, numerical experiments only detect eigenvalues of differentiable functions. This point seems worth a further study.

\section{Eigenpolynomials of the transfer operator}

We now prove Theorem \ref{polyef}, which concerns the existence of polynomial eigenfunctions of the transfer operator for affine IFS on $\reals$. We consider affine maps $f_i(x)=t_ix+v_i$ on $\reals$ with $0<t_i<1$ for $i=1,...,m.$
The self-similar set $X$ given by the $f_i$ is the unique compact set which fulfills the equation $X=\bigcup_{i=1}^m f_i(X).$
Moreover, we have probabilities $p_i>0$ with $\sum_{i=1}^m p_i=1,$ and the self-similar measure $\nu$ on $X$ is defined by equation \eqref{hut}. The only assumption which we need is that $X$ consists of more than one point. Then $X$ has infinitely many points, and each polynomial on $\reals$ is determined by its values on $X.$ 

\begin{proof}[Proof of Theorem \ref{polyef}] 
The transfer operator $T$ defined in \eqref{trans} with mappings $f_i(x)=t_i x + v_i$ for $i=1,\ldots,m$ maps the polynomial $b_k(x)=x^k$ to 
\begin{equation}\label{Tbk}
Tb_k(x)= \sum_{i=1}^m p_i (t_ix+v_i)^k = \sum_{l=0}^k\sum_{i=1}^m p_i \binom kl t_i^l v_i^{k-l}x^l
\end{equation}
which is a linear combination of $b_0(x),b_1(x),\ldots,b_k(x)$ and is thus a polynomial of degree $k$. This shows that the space of polynomials of degree $\leq k$ is an invariant subspace of $T$. 
The action of $T$ on the polynomials of degree $\leq n$ is then represented, with respect to the basis polynomials $b_0,b_1,b_2,\ldots,b_n$, by the matrix

$$ T^{(n)} =  \begin{pmatrix}
1 & \ast & \ast  &  \ldots   & \ast \\[1ex]
0	& \sum_{i=1}^m p_i t_i & \ast    & \ldots &  \ast   \\[1ex]
0 & 0 &  \sum_{i=1}^m p_i t_i^2 & \ldots & \ast \\[1ex]
\vdots & &\ddots &\ddots & \vdots \\[1ex]
0 & 0 & \ldots  & 0 & \sum_{i=1}^m p_i  t_i^n
\end{pmatrix} $$
with entries 

\begin{equation}\label{Tn}
T^{(n)}_{lk} = \binom kl \sum_{i=1}^m p_i t_i^l v_i^{k-l}\quad \text{for} \;\; 0\leq l \leq k  \leq n
\end{equation}
and 0 else. Note that $Tb_k=\sum_{l=0}^k T_{lk}^{(n)}b_l$. The invariant subspaces show up in the upper-triangular shape. 
The eigenvalues of  $T^{(n)}$ are the diagonal entries $\lambda_k = \sum_{i=1}^m p_i t_i^k,$ $k=0,1,\ldots,n$. Since $0<t_i<1$, they satisfy $\lambda_0=1 > \lambda_1 > \ldots > \lambda_n$. All eigenvalues are different and the right eigenvectors are polynomial eigenfunctions $q_0=1, q_1,\ldots,q_n$ of $T$ of degree $0,1,\ldots,n$ respectively.
\end{proof}

Theorem \ref{polyef} can be extended to some other IFS. 

\begin{rem}\emph{(Complex case)}\label{remcplx}\\
Consider an IFS on $\cplx$ with mappings $f_i(z)=t_iz+v_i$ with $t_i,v_i\in \cplx$, $i=1,\ldots,m$, and probabilities $p_1,\ldots,p_m$. In this case the self-similar set $X$ is a subset of $\cplx$. The operator $T$ defined in equation \eqref{trans} acts on the complex vector space $C(X)$ of continuous functions on $X$ and has eigenfunctions and eigenvalues as in the real case. But now we have complex polynomials, and we will add their conjugates. The proof of Theorem 1 goes through when we regard the $b_k(z)=z^k$ as complex polynomials. With the additional assumption that the complex eigenvalues $\lambda_0=1,\lambda_1,\lambda_2,\ldots$ are all different, we see that $T$ has complex eigenpolynomials $q_0=1,q_1,q_2,\ldots$. The 'conjugate polynomials' $1,\overline{z},\overline{z}^2,\ldots$ yield other invariant subspaces of $T$. Namely, $\overline{z}^k$ is mapped into a linear combination of $1,\overline{z},\ldots,\overline{z}^k$. The action of $T$ on $1,\overline{z},\ldots,\overline{z}^n$ is then represented by the matrix $\overline{T^{(n)}}$ which yields the conjugate eigenpolynomials $\overline{q_1},\overline{q_2},\ldots$ for the eigenvalues $\overline{\lambda_1},\overline{\lambda_2},\ldots$.
\end{rem}

\begin{rem}\emph{($d$-dimensional case)}\\ 
On $\reals^d$ the polynomials $q(x)=q(x_1,...,x_d)$ of degree $n$ are linear combinations of the monomials $b_k(x)=x_1^{k_1}x_2^{k_2}... x_d^{k_d}$ where $k=(k_1,...,k_d)$ runs through the vectors of nonnegative integers with $k_1+...+k_d=n.$ We consider an affine IFS $f_i(x)=L_ix+v_i, i=1,...,m$ where each $L_i$ is a $d\times d$ matrix. Then it is easy to check that the space of polynomials of degree $\le n$ is an invariant subspace of the transfer operator $T$ for each $n.$

To get a triangular matrix $T^{(n)},$ one has to order the basis vectors $b_k$ as above with respect to ascending degree $n=0,1,2,...$ Moreover, one has to impose conditions on the maps. We assume diagonal matrices, as used for Bedford-McMullen carpets and Gatzouras-Lalley carpets: $L_i$ has diagonal vector $\ell_i=(\ell_{i1},...,\ell_{id})$ and zeros outside the diagonal.  
Then $b_k(L_ix)=b_k(\ell_i)b_k(x),$ and equation \eqref{Tbk} obtains the form 
\begin{equation*}\label{Tbkd}
Tb_k(x)= \sum_{i=1}^m p_i b_k(L_ix+v_i) = \sum_{i=1}^m p_i b_k(\ell_i) b_k(x) + \mbox{ terms of degree smaller than }n\, . 
\end{equation*}

So we get an upper triangular matrix $T^{(n)}$ with diagonal entries $\lambda_k=\sum_{i=1}^m p_i b_k(\ell_i)$ which are the eigenvalues. If all $f_i$ have the same matrix $L$ with diagonal $\ell ,$ then $\lambda_k=b_k(\ell) .$
In case that the $\lambda_k$ are all different, the eigenpolynomials $q_k$ form a basis for the space of polynomials of degree $\leq n$. A similar remark holds if the $L_i$ are upper triangular matrices. 
\end{rem}

\section{Approximation of the self-similar measure}

We proved that the $(n+1)$-dimensional vector space $P_n$ of polynomials of degree at most $n$ is invariant under $T$ and contains $n$ linearly independent non-constant eigenpolynomials. These eigenpolynomials are all orthognal to the self-similar measure $\nu$ in the sense that
\begin{equation}\label{ornu} \nu (q_k)=\int_X q_k(x) d\nu (x)=0 \quad\mbox{ for } k=1,2,... \end{equation}
This follows from the duality of the operators $T$ and $H,$ and the fact that the eigenvalue of $q_k$ fulfils  $\lambda_k\not= 1$ for $k\ge 1$ by a standard argument:  
\[ \nu (q_k)=(H\nu )(q_k)=\nu (Tq_k) =\nu (\lambda_kq_k)=\lambda_k \nu (q_k)\ .\]
So there is one dimension left in the space, and Theorem \ref{polapprox} says that the vector $v_n$ which is orthogonal to all eigenpolynomials, considered as a density function, is the best possible approximation of $\nu $ on $P_n.$  Moreover, the space of all polynomials is dense in $C(X),$ so that the sequence of density functions $v_n$ is in some respect the best approximation of $\nu$ by continuous density functions. We now prove Theorem \ref{polapprox}, assuming that the self-similar set $X$ is an interval.  We use the scalar product \eqref{scalarprod} and the norm $\| q\|=\sqrt{\langle q,q\rangle}.$ 

\begin{proof}[Proof of Theorem \ref{polapprox}]
Let $\rm H$ be the $n$-dimensional hyperplane generated by the eigenpolynomials $q_1,...,q_n$ in the $(n+1)$-dimensional space $P_n$ of polynomials of degree $\le n .$ For $h\in \rm H$ we have $\nu(h)=0$ and $\langle v_n,h\rangle =0.$ 
A polynomial $q\in P_n\setminus \rm H$ can be written as $q=\alpha v_n +h$ with $\alpha\not= 0$ and $h\in\rm H.$ This implies
\[ \nu(q)=\alpha\nu(v_n)\qquad\mbox{ and }\quad \langle v_n,q\rangle  =\alpha \langle v_n,v_n\rangle . \]
We see that $\nu(q)/\langle v_n,q\rangle =\nu(v_n)/\langle v_n,v_n\rangle $ is a constant. The assumption $\nu(1)=\langle v_n,1\rangle=1$ shows that this constant equals one, and that $\nu(v_n)>0.$  The equality  $\nu(q)=\langle v_n,q\rangle$ says that $v_n$ is the \emph{generating vector} of the linear form $\nu$ on the Hilbert space $P_n.$ The Cauchy-Schwartz inequality $\nu(q)\le \| q\|\cdot \| v_n\|$ directly implies the inequality of Theorem \ref{polapprox}.

Now we show $L^2$ convergence of the $v_n,$ for the case that $\nu$ has an $L^2$ density $v.$ In this case the above proof implies that $v_n$ is the orthogonal projection from $v$ onto the space $P_n.$ Thus $\| v_n-v\|$ equals the distance from $v$ to $P_n$ which converges to zero by the Weierstrass theorem and the fact that continuous functions are dense in $L^2$, see for example \cite[Theorem~1.10.18, Proposition 1.10.4]{tao}.
\end{proof}

The case that $\nu$ does not possess an $L^2$ density will be discussed in Section 7.
$L^2$ convergence applies in particular to the Bernoulli convolutions treated in Section 6 since their self-similar measures belong to $L^2$ for almost every parameter, cf. \cite{solomyak00,solomyak04}.
We assumed that $X$ is an interval but Theorem \ref{polapprox} also holds under different assumptions.

\begin{rem}\emph{(IFS with open set condition)} \\
If $f_1,...,f_m$ satisfies the open set condition, it is known that the $s$-dimensional Hausdorff measure $\mathcal{H}^s$ on $X$ with $\sum_{i=1}^m t_i^s=1$ is positive and finite \cite{hutchinson,falconer}. We can replace the Lebesgue measure in the above proof by $\mathcal{H}^s,$ in particular in the definition \eqref{scalarprod} of $\langle f,g\rangle .$ Using the Stone-Weierstrass theorem for compact Hausdorff spaces $X$ (see e.g. \cite[Theorem~1.10.18]{tao}), we obtain
Theorem \ref{polapprox}.
\end{rem} 

\begin{rem}\emph{($d$-dimensional case)} \\  Based on Remark 2, Theorem \ref{polapprox} can be extended to the multidimensional case which is work in progress. For complex maps, as mentioned in Remark 1, there is the problem that the algebra generated by complex polynomials and their conjugates, needed for the Stone-Weierstrass theorem, is much larger than the generated vector space. 
\end{rem}

\section{Moments and construction of the best approximation}

The $j$-th moment of the probability measure $\nu$ is the integral  $m_j=\nu(b_j)=\int_X x^j d\nu ,$ that is, the value of $\nu$ on the $j$-th basis vector in the polynomials, for $j=0,1,2,...$ Characterization of measures by moments is a classical topic \cite{shohat}, for self-similar measures see \cite{barnsleydemko,baileyrose}. 
The eigenpolynomials of $T$ provide a recursive method to compute moments of $\nu$. 

\begin{pro}\label{mompol}(Eigenpolynomials and moments of the self-similar measure) \\
Let $q_n(x)= a_{n,0} + a_{n,1} x + \ldots + a_{n,n-1}x^{n-1} + x^n$ be the eigenpolynomials of the transfer operator with leading coefficient one.  The moments $m_n = \int_X x^n d\nu, \; n=1,2,\ldots,$ can be computed by 
$$ m_n = - a_{n,0}  - a_{n,1} m_1 - \ldots - a_{n,n-1} m_{n-1} $$
\end{pro}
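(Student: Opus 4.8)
The plan is to derive the recursion directly from the orthogonality relation \eqref{ornu}, which states that $\nu(q_k)=0$ for every $k\geq 1$. Since each $q_n$ with $n\geq 1$ is an eigenpolynomial of $T$ with eigenvalue $\lambda_n\neq 1$, this relation applies to $q_n$, and it is the only input needed.

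First I would expand $\nu(q_n)$ using linearity of the integral against $\nu$ together with the monic form $q_n(x)=a_{n,0}+a_{n,1}x+\cdots+a_{n,n-1}x^{n-1}+x^n$. Writing $m_j=\nu(b_j)=\int_X x^j\,d\nu$ for each $j$, this yields
\[ \nu(q_n) = a_{n,0}\,m_0 + a_{n,1}\,m_1 + \cdots + a_{n,n-1}\,m_{n-1} + m_n. \]
Next I would use that $\nu$ is a probability measure, so $m_0=\nu(1)=1$, and that the left-hand side vanishes by \eqref{ornu}. Setting the right-hand side equal to zero and solving for the top moment $m_n$ then gives exactly the stated formula
\[ m_n = -a_{n,0} - a_{n,1}\,m_1 - \cdots - a_{n,n-1}\,m_{n-1}. \]

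There is no serious obstacle here: the entire content of the proposition is the combination of the already-established orthogonality \eqref{ornu} with linearity of $\nu$ and the normalization $m_0=1$. The only point deserving a word of care is the role of the constant term, where the factor $m_0$ must be recognized as $1$ rather than treated as an unknown moment. It is worth emphasizing that the resulting formula is genuinely recursive: starting from $m_0=1$, each $m_n$ is determined by the lower moments $m_1,\ldots,m_{n-1}$ and the known coefficients of $q_n$, so the eigenpolynomials supply an explicit scheme for computing all moments of $\nu$ in turn.
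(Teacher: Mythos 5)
Your proposal is correct and follows exactly the paper's own argument: expand $\nu(q_n)=0$ by linearity, use $m_0=\nu(1)=1$, and solve for $m_n$. Nothing further is needed.
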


\emph{Proof. }
From  $\int_X q_n d\nu = 0$ for $n=1,2,\ldots$ we conclude 
\[ a_{n,0} \int_X d\nu + a_{n,1}\int_X x d\nu + \ldots  + \int_X x^n d\nu = a_{n,0} + a_{n,1} m_1+ \ldots +  m_n = 0\, .\qquad\hfill\qed\vspace{1ex}\]

Now we have the moments, and we show how to compute the approximating polynomials $v_n$ by a system of linear equations with a well-known coefficient matrix.

\begin{pro}\label{coeffs}(System of linear equations for the approximating polynomials) \\
The orthogonality relation \eqref{orth} which defines the polynomial $v_n$ is equivalent to the condition 
\begin{equation}\label{momen} \langle v_n,b_k \rangle = m_k \quad\mbox{ for }\ k=0,1,\ldots,n\, .\end{equation}
Consequently, the coefficient vector $u=(u_0,u_1,\ldots,u_n)'$ of $v_n(x)=u_0+u_1x+\ldots+u_n x^n$ satisfies the linear system of equations
$$ Gu = m$$
where $m= (m_0,m_1,\ldots,m_n)'$ and $G$ is the Hilbert matrix with entries
$$ G_{ij} = \int_X x^{i+j} dx, \quad \;\; i,j=0,1,\ldots,n .$$
\end{pro}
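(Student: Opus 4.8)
The plan is to prove the two asserted equivalences in turn. First I would establish that the orthogonality relations \eqref{orth} defining $v_n$ are equivalent to the moment conditions \eqref{momen}. The key observation is that the eigenpolynomials $q_0=1,q_1,\ldots,q_n$ and the monomials $b_0,b_1,\ldots,b_n$ are two bases for the same space $P_n$, and the change of basis between them is triangular since $q_k$ has degree exactly $k$. Thus the linear span of $\{q_0,\ldots,q_k\}$ equals the linear span of $\{b_0,\ldots,b_k\}$ for every $k$. I would use this to translate conditions stated against the $q_k$ into conditions stated against the $b_k$.

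\medskip

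Concretely, recall from \eqref{ornu} that $\nu(q_k)=0$ for $k\ge 1$ while $\nu(q_0)=\nu(1)=1$. So the defining relations \eqref{orth}, namely $\langle v_n,q_0\rangle=1$ and $\langle v_n,q_k\rangle=0$ for $k=1,\ldots,n$, can be rewritten uniformly as $\langle v_n,q_k\rangle=\nu(q_k)$ for all $k=0,1,\ldots,n$. This is exactly the statement that the linear forms $q\mapsto\langle v_n,q\rangle$ and $q\mapsto\nu(q)$ agree on the basis $q_0,\ldots,q_n$, hence on all of $P_n$ by linearity. In particular they agree on the monomials $b_k$, which gives $\langle v_n,b_k\rangle=\nu(b_k)=m_k$ for $k=0,1,\ldots,n$, establishing \eqref{momen}. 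The converse runs identically: if $\langle v_n,b_k\rangle=m_k=\nu(b_k)$ for all $k\le n$, then the two linear forms agree on the monomial basis and hence on every $q_k$, recovering \eqref{orth}. The equivalence is therefore just the observation that two linear functionals on the finite-dimensional space $P_n$ coincide iff they coincide on any basis.

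\medskip

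Second, I would unwind \eqref{momen} into the matrix equation. Writing $v_n(x)=\sum_{j=0}^n u_j x^j=\sum_{j=0}^n u_j b_j(x)$ and expanding the scalar product by bilinearity gives, for each $k$,
\[ \langle v_n,b_k\rangle=\sum_{j=0}^n u_j\,\langle b_j,b_k\rangle=\sum_{j=0}^n u_j\int_X x^{j+k}\,dx=\sum_{j=0}^n G_{kj}\,u_j. \]
Setting this equal to $m_k$ for each $k=0,1,\ldots,n$ is precisely the system $Gu=m$, where $G_{ij}=\int_X x^{i+j}\,dx$ is symmetric as claimed. When $X$ is the standard interval this $G$ is the classical Hilbert matrix, which justifies the name.

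\medskip

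I expect no serious obstacle here, since the proposition is essentially a bookkeeping statement once the triangular change of basis is invoked; the only point requiring a word of care is the passage from ``agree on the $q_k$'' to ``agree on the $b_k$,'' which rests on the fact that the $q_k$ genuinely form a basis of $P_n$. That was already secured in the proof of Theorem \ref{polyef}, where the eigenpolynomials of distinct eigenvalues $\lambda_0>\lambda_1>\cdots>\lambda_n$ were shown to be linearly independent of degrees $0,1,\ldots,n$. With that in hand the argument is purely formal linear algebra on the $(n+1)$-dimensional space $P_n$, and the claim that $G$ is the Hilbert matrix follows by direct computation of the entries.
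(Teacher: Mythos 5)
Your proof is correct and follows essentially the same route as the paper's: both arguments observe that, via \eqref{ornu}, condition \eqref{orth} says the linear forms $q\mapsto\langle v_n,q\rangle$ and $q\mapsto\nu(q)$ coincide on $P_n$, so that \eqref{momen} is just the same statement in the monomial basis, and then obtain $Gu=m$ by expanding $\langle v_n,b_k\rangle$ bilinearly. Your explicit remark that the $q_k$ form a basis of $P_n$ by the triangular change of basis is a point the paper leaves implicit, but the substance is identical.
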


\begin{proof}
The first assertion can be derived recursively from Proposition \ref{mompol}, or from the following argument. Because of \eqref{ornu}, condition \eqref{orth} says that $v_n$ and $\nu$ generate identical linear forms on the space of polynomials generated by $q_0=1,q_1,...,q_n.$ Equation \eqref{momen} expresses the same fact, referring to the standard base $b_0,...,b_n.$ So the two conditions are equivalent.

The explicit form of the equations \eqref{momen} is easily determined:\\
$$\langle v_n,b_j\rangle  = \int_X (u_0+u_1x+\ldots+u_n x^n) x^j \; dx = u_0 \int_X x^j dx + u_1 \int_X x^{j+1} dx + \ldots + u_n \int_X x^{n+j} dx $$
This is the $j$-th term of $Gu .$ 
\end{proof}

Of course, given the $q_k,$ condition \eqref{orth} can directly be considered as a system of equations. The above reformulation establishes a connection with classical matrices \cite{hilbert} which have been thoroughly studied by many authors, mainly for their notoriously bad condition number, see e.g. \cite{choi}, \cite[Example 3.3]{beckermann}.  Moreover, the system $Gu=m$ can be solved without determining the eigenfunctions $q_k$ when moments are directly calculated in a recursive way.

\section{Bernoulli convolutions}

The IFS for the Bernoulli convolutions with parameter $t\in (0,1)$ consists of the mappings $f_1(x)=tx+v_1$ and $f_2(x)=tx+v_2$ with $v_1\neq v_2$ and probabilities $p_1=p_2=\tfrac 12$. Since there is only one conjugacy class for each $t$ we can choose $v_1$ and $v_2$ arbitrarily. We work with the mappings
\begin{equation}\label{fi}
f_1(x)=tx-1+t, \quad f_2(x)=tx+1-t.
\end{equation}

We denote the corresponding self-similar measure by $\nu_t$. In case $t\in (0,\tfrac 12)$ the measure $\nu_t$ is supported on a Cantor set and is necessarily singular. For all $t\in [\tfrac 12, 1)$ the support is the same interval $X =[-1,1],$ which was a reason for choosing the mappings \eqref{fi}. 
In this case a density for $\nu_t$ might exist, and will exist for almost all $t>\frac12 .$ The question for which $t$ exactly we have a density has been studied by many authors over more than 75 years. See the surveys of Peres, Schlag, and Solomyak \cite{solomyak00}, Solomyak \cite{solomyak04}, and the recent work of Shmerkin \cite{hausdorffdim_shmerkin}. 

We will now prove a formula for the eigenpolynomials of the transfer operator for the Bernoulli convolutions. Then we  compute the moments of $\nu_{t}$ with Proposition \ref{mompol}.  Finally, we will use these moments to compute approximations of $\nu_t$ with Proposition \ref{coeffs}. 

\begin{thm}(Explicit formula for Bernoulli convolutions)\label{bcpols}\\
For the Bernoulli convolution with mappings $f_1(x)=tx-1+t$ and $f_2(x)=tx+1-t$ the polynomial eigenfunctions $q_n$ of the transfer operator correspond to the eigenvalues $\lambda_n = t^n$ and can be written as 
\begin{equation}\label{polybc} 
q_n(x) = \sum_{k=0}^n a_{n,k} x^k\quad \; \; n=0,1,2,...  \end{equation}
where $a_{n,n} = 1$, $ a_{n,n-1}=0,$ and the other coefficients are determined recursively by 
\[ 
a_{n,k} = \frac 1{t^{n-k}-1}  \sum_{s=1}^{\floor{\frac {n-k}2}} \binom{k+2s}{k} (1-t)^{2s} a_{n,k+2s} \quad \; \; k=0,1,\ldots,n-2.
\]
Thus $q_n$ is an even function for even $n$ and an odd function for odd $n$.
\end{thm}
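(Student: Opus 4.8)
The plan is to verify the eigenvalue equation $Tq_n = t^n q_n$ coefficient by coefficient, reading the stated recursion directly off the matrix entries \eqref{Tn} specialized to the Bernoulli case. First I would compute those entries. With $t_1=t_2=t$, $v_1=-(1-t)$, $v_2=1-t$, and $p_1=p_2=\tfrac12$, formula \eqref{Tn} gives
$$ T^{(n)}_{lk} = \binom kl\, t^l (1-t)^{k-l}\,\frac{(-1)^{k-l}+1}{2}, $$
so that $T^{(n)}_{lk} = \binom kl t^l (1-t)^{k-l}$ when $k-l$ is even and $0$ when $k-l$ is odd. The decisive point is this parity cancellation, which is forced by the antisymmetry $v_2=-v_1$ of the two translations; it is what ultimately produces the even/odd structure of $q_n$.

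Next I would express the eigenpolynomial as the right eigenvector of $T^{(n)}$ for the eigenvalue $t^n$ guaranteed by Theorem \ref{polyef}. Writing $q_n=\sum_k a_{n,k} b_k$ and using $Tb_k=\sum_l T^{(n)}_{lk} b_l$, the equation $Tq_n=t^n q_n$ is equivalent to $\sum_k T^{(n)}_{lk} a_{n,k}=t^n a_{n,l}$ for every $l$. Substituting $k=l+2s$ to absorb the parity constraint, row $l$ becomes
$$ \sum_{s=0}^{\floor{\frac{n-l}2}} \binom{l+2s}{l} t^l (1-t)^{2s} a_{n,l+2s} = t^n a_{n,l}. $$
Isolating the $s=0$ term, which equals $t^l a_{n,l}$, and dividing by $t^l$ yields $(t^{n-l}-1)\,a_{n,l} = \sum_{s\ge 1}\binom{l+2s}{l}(1-t)^{2s} a_{n,l+2s}$, which is exactly the recursion of the theorem after renaming $l$ to $k$.

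I would then dispose of the boundary cases and of well-definedness. For $l=n$ the right-hand sum is empty and the left side is $t^n a_{n,n}$, so row $n$ reads $0=0$ and leaves $a_{n,n}$ free; I normalize $a_{n,n}=1$, which fixes the degree to be exactly $n$. For $l=n-1$ the sum over $s\ge 1$ is again empty while $t^{n-l}-1=t-1\ne 0$, forcing $a_{n,n-1}=0$. For $0\le l\le n-2$ one has $0<t<1$ and $n-l\ge 2$, hence $t^{n-l}-1\ne 0$, so the recursion determines $a_{n,l}$ unambiguously from the already-computed $a_{n,l+2},a_{n,l+4},\ldots$ by downward recursion from $a_{n,n}=1$. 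Since Theorem \ref{polyef} supplies a unique (up to scaling) degree-$n$ eigenpolynomial for the eigenvalue $\lambda_n=\tfrac12(t^n+t^n)=t^n$, the polynomial so constructed is that eigenpolynomial.

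The parity statement then follows from the structure of the recursion, which couples $a_{n,l}$ only to coefficients $a_{n,l+2s}$ of the same parity. The odd-index chain is seeded by $a_{n,n-1}=0$ and so vanishes identically, while the even-index chain is seeded by $a_{n,n}=1$; thus for $n$ even only even powers survive and $q_n$ is even, and for $n$ odd only odd powers survive and $q_n$ is odd. I do not expect a genuine obstacle: the whole argument is an elementary comparison of coefficients, and the only thing to watch carefully is the index bookkeeping in the substitution $k=l+2s$ together with the binomial expansion that drives the parity cancellation.
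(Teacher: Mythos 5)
Your proposal is correct and follows essentially the same route as the paper: specialize the matrix entries \eqref{Tn} to the Bernoulli case, observe the parity cancellation coming from $v_2=-v_1$, and read the recursion off the rows of the eigenvalue equation $T^{(n)}a=t^na$. Your treatment is if anything slightly more explicit than the paper's on the boundary rows ($l=n$ and $l=n-1$) and on why $t^{n-l}-1\neq 0$ makes the downward recursion well defined, but the substance is identical.
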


\begin{proof}
The action of the transfer operator on polynomials of degree $n$ is represented by the matrix $T^{(n)}$ defined in the proof of Theorem \ref{polyef}. Substituting the Bernoulli convolution parameters $t_1=t_2=t$, $p_1=p_2=\tfrac 12$ and $v_1=-1+t$, $v_2=1-t$ in  \eqref{Tn} yields the entries $T^{(n)}_{lk} = \binom kl (\tfrac 12 t^l (-1+t)^{k-l} + \tfrac 12 t^l(1-t)^{k-l})$ for $l\leq k \leq n$ and 0 else. Non-zero entries appear only if $k-l$ is non-negative and even, in which case $T^{(n)}_{lk} = \binom kl t^l (1-t)^{k-l}$:

$$ T^{(n)} =  \left( \begin{smallmatrix}
1  & \quad \quad 0 \quad \quad & \binom 20 (1-t)^2  & 0 & \binom 40 (1-t)^4 & 0   &\quad \quad \ldots\\[1ex]
0	&  \quad\quad t\quad \quad & 0   & \binom 31 t(1-t)^2 & 0  & \binom 51 t (1-t)^4 & \quad \quad \ldots  \\[1ex]
 & \quad\quad\;0 \quad\quad \;&  t^2 & 0 & \binom 42 t^2(1-t)^2 & 0 & \quad \quad \ldots \\[1ex]
\vdots  & \quad\quad \quad\quad & 0 & t^3 & 0 & \binom 53 t^3 (1-t)^2 & \quad \quad \ldots \\[2.2ex]
    & \quad\quad \quad \quad &   &0 & t^4 & 0 &  \quad \quad \ldots\\[1ex]
& \quad\quad \quad\quad & & & \ddots & \ddots & \quad \quad \quad \\[1ex]
& \quad\quad \quad\quad & & &  &  & \quad \quad \quad \\[1ex]
0 & \quad \quad\quad \quad &  & & & 0 &  \quad \quad t^n
\end{smallmatrix} \right)
.$$
The diagonal entries $\lambda_0=1, \lambda_1=t, \ldots, \lambda_n=t^n$ are eigenvalues of $T^{(n)}$ and thus of $T$ too. Let $q_n(x)=a_{n,0} +a_{n,1} x+ a_{n,2} x^2 + \ldots + a_{n,n-1} x^{n-1} + a_{n,n} x^n$ with $a_{n,n}=1$ be the  eigenpolynomial corresponding to the eigenvalue $\lambda_n=t^n$. The eigenvalue equation $Tp_n=t^n p_n$ can be rewritten as
\begin{equation}\label{Teq}
T^{(n)} a = t^n a 
\end{equation}
in terms of the coefficient vector $a=(a_{n,0}, a_{n,1}, a_{n,2}, \ldots, a_{n,n-1}, 1)'$.
Suppose $a_{n,k+1},\ldots,a_{n,n}$ are known for some $k \in \lbrace 0,1,\ldots, n-1 \rbrace$. Equating the $k-$th coordinate in (\ref{Teq}) yields 
$$ a_{n,k} t^k +  \sum_{s=1}^{\floor{\tfrac {n-k}2}}  \binom{k+2s}{k} t^k(1-t)^{2s}  a_{n,k+2s}=t^n a_{n,k}\, .$$
So we have the recursion $a_{n,k} = \frac 1{t^{n-k}-1}  \sum_{s=1}^{\floor{\frac {n-k}2}} \binom{k+2s}{k} (1-t)^{2s} a_{n,k+2s} .$  Setting $k=n-1$ shows that $a_{n,n-1}=0$. Thus $q_n$ is a polynomial of degree $n$ with either odd or even powers of $t$.
\end{proof}

With the aid of the  eigenpolynomials we compute the moments of $\nu_t$, as shown in Proposition \ref{mompol}.
Then we turn to the computation of the approximations
$$v_{t,n}(x) = u_0 + u_1x + \ldots + u_n x^n$$
defined in Theorem \ref{polapprox}. Let $m=(m_0,m_1,\ldots,m_n)'$ be the first $n+1$ moments of $\nu_{t}$. By Proposition \ref{coeffs}, the coefficients $u=(u_0,u_1,\ldots,u_n)'$ satisfy the equation
\begin{equation}\label{lgs}
Gu = m
\end{equation}
with matrix $G_{ij}=\int_{-1}^1 x^{i+j} dx = \tfrac 1{i+j+1} (1-(-1)^{i+j+1})$ for $i,j=0,1,\ldots,n$. This Hilbert matrix has the shape

$$ G = 2 \begin{pmatrix}
1	&  0	& \frac 13	 & 0  & \frac 15 & \ldots     \\
0	& \frac 13	& 0 & \frac 15  & 0  & \ldots \\
\frac 13	& 0 &  \frac 15 & 0  & \
\frac 17 & \ldots \\
0 &  \frac 15 & 0 & \frac 17  & 0 & \ldots \\
\vdots &\cdot&\cdot&\cdot & \cdot & \ldots
\end{pmatrix} $$

Equation \eqref{lgs} splits into an even part for computing $u_0,u_2,\ldots ,$ and an odd part for computing $u_1,u_3,\ldots$.  The  equation $G_* u_* = m_*$ for the odd coefficients $u_*=(u_1,u_3,\ldots)'$ has right hand side $m_*=(m_1,m_3,...)'=0$  and  $u_*$ vanishes since we approximate a measure which is symmetric around the origin.
The coefficients $ u^*=(u_0,u_2,\ldots, )'$ of even powers of $x$ depend on the even moments $m^*=(m_0,m_2,\ldots)'$ and are determined by 
$ G^* m^* = u^*$
with matrix 
$$ G^* = 2 \begin{pmatrix}
1& \frac 13	  & \frac 15 & \frac 17 & \ldots     \\
 \frac 13 & \frac 15   & \frac 17 & \frac 19 & \ldots \\
\frac 15	 &  \frac 17   & \frac 19  & \frac 1{11} &\ldots \\
  \frac 17  &  \frac 19  & \frac 1{11} & \frac 1{13} & \ldots\\
\vdots &\cdot  & \cdot & \cdot & \ldots 
\end{pmatrix}.$$

$G^*$ is, up to the factor 2, the Hilbert matrix studied in \cite{hilbert} and shown to be invertible. Note that the entries do not depend on $t.$ This is good for numerical computations, since one has to store the inverse only once for all $t.$

Figure \ref{polyax} shows approximations of Bernoulli convolutions for $t=0.6$ and $t=0.8$. \vspace{3ex}

\section{Remarks on singular measures}
What happens if $\nu$ is a singular measure?  We tried to prove that our approximating polynomials $v_n,$ taken as measures $\nu_n(B)=\int_B v_n(x)\, dx\, ,$ will converge weakly to $\nu ,$ in the sense that $\nu_n(f)\to \nu(f)$ for continuous functions $f.$ Fortunately, a referee found a gap in our proof, and an error in an attempt to repair the proof. We are very grateful for the referee's careful work!  Let us briefly discuss the approximation of singular measures.

\begin{pro}\label{sing}(Properties of approximating polynomials)\\
Under the assumptions of Theorem  \ref{polapprox}, the $v_n$ satisfy
\begin{enumerate}
\item $\langle v_n, q\rangle = \nu (q)$  for all polynomials $q$ of degree $\le n\, .$ 
\item $\langle v_n, q\rangle \ge 0$ for all polynomials $q$ of degree $\le n$ with $q(x)\ge 0$ for $x\in X\, .$
\item $\| v_n\|^2\le \max_{x\in X} v_n(x)\ .$
\item If $\nu$ does not admit an $L^2$ density then $\| v_n\|\to\infty .$ 
\end{enumerate} 
\end{pro}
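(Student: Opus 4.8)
The plan is to handle the four assertions in sequence, with (1) carrying the structural content and (4) being the one substantial claim.

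For (1), I would recall from the proof of Theorem \ref{polapprox} that $v_n$ is characterized by $\langle v_n, q_0\rangle = 1$ and $\langle v_n, q_k\rangle = 0$ for $1 \le k \le n$, while \eqref{ornu} gives $\nu(q_0) = 1$ and $\nu(q_k) = 0$ for $k \ge 1$. Hence $\langle v_n, q_k\rangle = \nu(q_k)$ for each $k = 0,\ldots,n$; since the $q_k$ form a basis of $P_n$, the two linear functionals $q \mapsto \langle v_n, q\rangle$ and $q \mapsto \nu(q)$ coincide on all of $P_n$. Assertion (2) then follows at once: for $q \ge 0$ on $X$ of degree $\le n$ we get $\langle v_n, q\rangle = \nu(q) = \int_X q\, d\nu \ge 0$ by positivity of $\nu$.

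For (3), I would apply (1) to the particular polynomial $q = v_n \in P_n$, obtaining $\|v_n\|^2 = \langle v_n, v_n\rangle = \nu(v_n) = \int_X v_n\, d\nu$; since $\nu$ is a probability measure and $v_n(x) \le \max_{y\in X} v_n(y)$ pointwise, the integral is bounded by $\max_{y \in X} v_n(y)$, which is (3).

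The substance is in (4). Here I would first record the variational identity $\|v_n\| = \sup\{\, |\nu(q)| : q \in P_n,\ \|q\| = 1 \,\}$, which is exactly the statement that $v_n$ is the Riesz generating vector of the form $\nu$ on the Hilbert space $P_n$, as shown in the proof of Theorem \ref{polapprox}. Because $P_n \subseteq P_{n+1}$, this supremum runs over a growing family, so $(\|v_n\|)_n$ is nondecreasing; it therefore suffices to show the sequence is unbounded when $\nu$ has no $L^2$ density, and I would argue the contrapositive. Assuming $\|v_n\| \le M$ for all $n$, weak sequential compactness of bounded sets in $L^2(X)$ produces a subsequence $v_{n_j} \rightharpoonup v$ for some $v \in L^2(X)$. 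For a fixed polynomial $q$, assertion (1) gives $\langle v_{n_j}, q\rangle = \nu(q)$ as soon as $n_j \ge \deg q$, and passing to the weak limit yields $\langle v, q\rangle = \nu(q)$ for every polynomial $q$. Both $f \mapsto \int_X v f\, dx$ and $f \mapsto \nu(f)$ are $\|\cdot\|_\infty$-continuous on $C(X)$ and agree on polynomials, which are dense by Weierstrass, so they agree on all of $C(X)$; by the uniqueness part of the Riesz--Markov theorem this forces $v\, dx = \nu$, i.e. $\nu$ has the $L^2$ density $v$, contradicting the hypothesis.

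The hard part will be (4), specifically the passage from boundedness of $\|v_n\|$ to existence of an $L^2$ density: the monotonicity coming from the sup-characterization reduces divergence to mere unboundedness, but one must take care that the weak $L^2$ limit $v$ genuinely represents $\nu$ \emph{as a measure}, which requires combining the agreement on the dense set of polynomials with Riesz--Markov uniqueness rather than relying on weak convergence alone. Assertions (1)--(3), by contrast, are little more than bookkeeping built on Theorem \ref{polapprox}.
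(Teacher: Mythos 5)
Your proposal is correct and follows essentially the same route as the paper: (1) from the defining orthogonality relations plus \eqref{ornu}, (2) as an immediate consequence, (3) from the identity $\|v_n\|^2=\nu(v_n)\le\max_X v_n$, and (4) by monotonicity of $\|v_n\|$ together with weak $L^2$ compactness and density of polynomials. The only cosmetic differences are that the paper derives monotonicity from $v_n$ being the orthogonal projection of $v_{n+1}$ onto $P_n$ rather than from the Riesz sup-characterization, and proves (3) via (2) applied to $M-v_n$; your added appeal to Riesz--Markov uniqueness in (4) just makes explicit a step the paper leaves implicit.
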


\begin{proof} (1) was proved in Section 4, and (2) is an immediate consequence. For (3), let $M= \max_{x\in X} v_n(x)\, .$ Then $q=M-v_n$ is a positive function. (2) implies $\langle v_n, M-v_n\rangle\ge 0$ since $\langle 1,v_n\rangle =1.$  

For (4), we note that  $\| v_n\|$ is an increasing sequence since $v_n$ is the orthogonal projection of $v_{n+1}$ onto $P_n.$ If the $\| v_n\|$ are bounded, they form a weakly relatively compact set in $L^2,$ and a subsequence of $v_n$  weakly converges to some $v\in L^2.$ Since (1) implies $\langle v, q\rangle = \nu (q)$ for all polynomials $q$ which are dense in $L^2$ as well as in $C(X),$ this $v$ must be an $L^2$ density of $\nu .$ So for singular $\nu$ the sequence $\| v_n\|$ tends to infinity. This kind of argument is known as uniform boundedness principle, cf. \cite[Corollary~1.7.7]{tao}. 
\end{proof}

\begin{figure}[h!]
\includegraphics[width=0.97\textwidth]{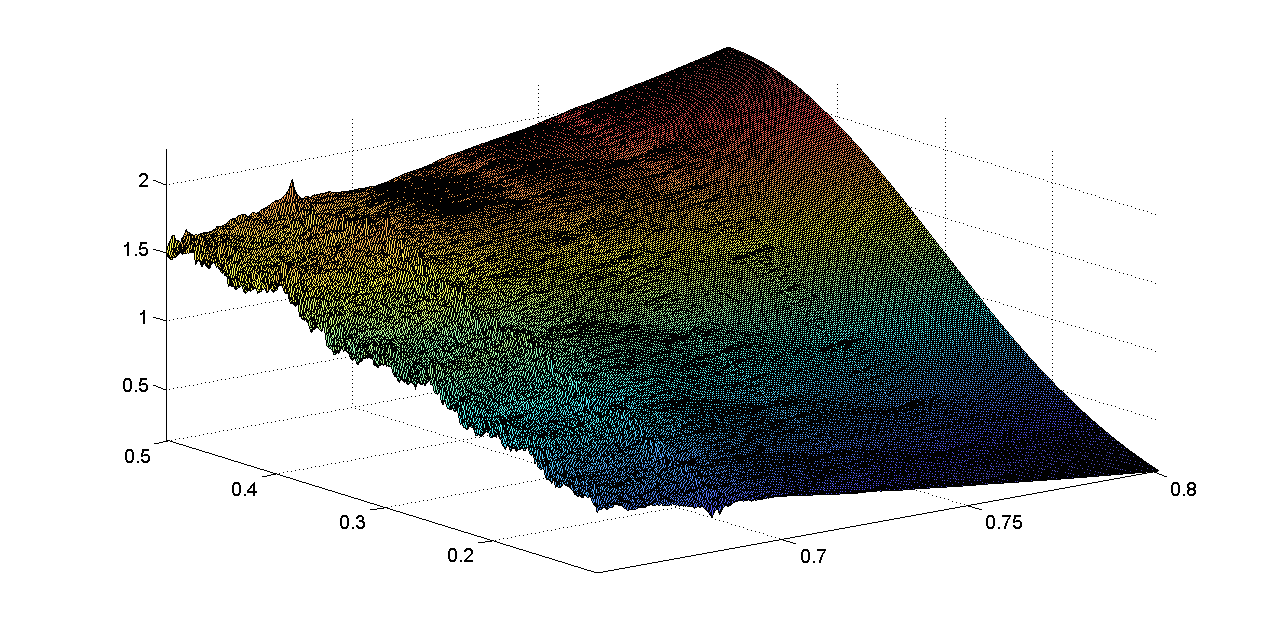}  
\caption{Polynomial approximation to the two-dimensional Bernoulli density for $0.65\le t\le 0.8\, , \, 0.1\le x\le 0.5 .$ Apparent singularities at $t=0.68$ correspond to the Pisot number $\beta=1.4656 .$ For the first two Pisot numbers  the singularity is less obvious. } \label{BCfamily}
\end{figure}

The proposition indicates that approximation of singular measures requires unbounded and highly oscillating polynomials.  For instance, (3) shows that the set $S=\{ x\, |\, v_n(x)\ge \frac{M}{2}\}$ fulfils $M\ge\| v_n\|^2\ge\int_S v_n^2\ge (\frac{M}{2})^2\cdot |S|$ and thus has Lebesgue measure $|S|\le\frac4M$ which is very small for very large $M.$ Such polynomials seem not to be helpful for a study of specific singular measures. \medskip

It seems appropriate, however, to study singular measures and polynomials within the context of a parametric family where most  measures have an $L^2$ density. Due to their construction, the $v_n$ continuously depend on the parameter for each $n.$ We have good polynomial approximations in $L^2$ for all regular parameter values. For the singular values, we can study the singularities of the two-variable function which is obtained in this way. Figure \ref{BCfamily} shows a two-variable polynomial approximating the family of Bernoulli convolutions for   $0.65\le t\le 0.8\, .$
There are three Pisot numbers $\beta= 1.3247,\, 1.3803, \, 1.4656$ where $\nu$ is known to be singular \cite{solomyak00}. The figure shows a very smooth function: a polynomial of degree 28 in $t$ and $x.$  The singularities at $\beta=1.4656$ are obvious, those at the other parameters are barely visible.  A study of Bernoulli convolutions as a two-parameter $L^2$ function was initiated in \cite{bandt15} and will be continued elsewhere.

\bibliographystyle{plain}
\bibliography{lt_ep}
\vspace{3ex}

\noindent
Christoph Bandt, Helena Pe\~na\\
Institute of Mathematics, University of Greifswald, Germany\\
\url{bandt@uni-greifswald.de, thaki1110@gmail.com}

\end{document}